\theoremstyle{definition}
 \newtheorem{theorem}{\bf Theorem}[section]
 \newtheorem{lemma}[theorem]{Lemma}
 \newtheorem{corollary}[theorem]{Corollary}
\theoremstyle{definition}
 \newtheorem{remark}[theorem]{Remark}
 \newtheorem{definition}[theorem]{Definition}
 \newtheorem{proposition}[theorem]{Proposition}
\numberwithin{equation}{section}
\author[A. Marinkovi\'c]{Aleksandra Marinkovi\'c}
\address{Center for Mathematical Analysis, Geometry and Dynamical Systems, 
Mathematics Department, 
Instituto Superior T\'ecnico, Universidade de Lisboa 
Av. Rovisco Pais, 1049-001 Lisboa, Portugal }
\curraddr{}
\email{aleksperisic@yahoo.com}
\urladdr{}
\dedicatory{}
\title[]{Symplectic fillability of toric contact manifolds}
\begin{document}
\begin{abstract}
According to Lerman, compact connected toric contact 3-manifolds with a non-free toric action whose moment cone spans an angle greater than $\pi$ are overtwisted, thus non-fillable. In contrast,
we show that all compact connected toric contact manifolds in dimension greater than three are weakly symplectically fillable and most of them are strongly symplectically fillable. The proof is based on the Lerman's classification of toric contact manifolds and on our observation that the only contact manifolds in higher dimensions that admit free toric action are the cosphere bundle of $T^d, d\geq3$ $(T^d\times S^{d-1})$ and $T^2\times L_k,$ $k\in\mathbb{N},$ with the unique contact structure.
\end{abstract}

\maketitle
{\it keywords}: contact manifold, toric action, symplectic fillability.

\section{Introduction}
A \textbf{toric contact manifold} is a co-oriented contact manifold $(V^{2d-1},\xi)$ with an effective action of the torus $T^d=(\mathbb{R}/2\pi\mathbb{Z})^d,$ that preserves the contact structure $\xi$.
A complete classification of compact connected toric contact manifolds was done by Lerman \cite[Theorem 2.18.]{Lerman1}. 
 Contact $3$-manifolds that admit a free toric action are contactomorphic to $(T^3,\xi_k=\ker(\cos( k\theta) d\theta_1+\sin( k\theta) d\theta_2)), k\in\mathbb{N},$ while contact 3-manifolds that admit a non-free toric action are topologically $S^1\times S^2$ and lens spaces $L_{p/q},$ $p\in\mathbb{Z},$ $q\in\mathbb{Z}\backslash\{0\}$ with various toric contact structures. Higher dimensional contact manifolds with a non-free toric action are either of Reeb type or they are contactomorphic to $T^k\times S^{2d-k-1},k\in\mathbb{N}$  with the unique toric contact structure (see Section \ref{subsection non free}). Finally, higher dimensional contact manifolds with a free toric action are principal $T^{d}-$bundles over a sphere $S^{d-1}$ and each principal $T^{d}-$bundle over $S^{d-1}$ admits unique toric contact structure. These bundles are classified by $\pi_1(S^{d-2},\mathbb{Z}^d)$ and thus, if $d\neq3$ there is only a trivial bundle and if $d=3$ there are $\mathbb{Z}^3$ such bundles, each of them uniquely represented by the triple of integers. In Section \ref{subsection free} we show that all toric contact structures corresponding to the triples with the same greatest common divisor are contactomorphic and that
 all these contact structures can be obtained by the Bourgois construction \cite{Bourgeois} starting from the natural open book on the standard contact sphere.

\vskip1mm
In this article we examine the question of strong and weak symplectic fillability of toric contact manifolds. A symplectic fillability is an invariant of a contact structure and does not depend on the toric action. A compact symplectic manifold $(W^{2d},\omega)$ is called \textbf{a strong symplectic filling} of a co-oriented contact manifold $(V^{2d-1},\xi)$ if $V$ is a topological boundary of $W$, and if there is a vector field $X$ defined in a neighborhood of $V$ in $W$ such that $\omega(X,\cdot)|_{T V}$ is a positive contact form for $(V,\xi)$ and $L_X\omega=\omega.$
Note that the last condition implies that $\omega$ is exact near the boundary of $W,$ due to Cartan's formula
$L_X\omega=d\omega(X,\cdot,\cdot)+d(\omega(X,\cdot))=d(\omega(X,\cdot)).$ When the condition $L_X\omega=\omega$ holds on the whole manifold $W,$ then the vector field $X$ is called a Liouville vector field.
A compact symplectic 4-manifold $(W,\omega)$ is called \textbf{a weak symplectic filling} of a contact 3-manifold $(V,\xi)$ if $V$ is a topological boundary of $W$ and if $\omega|_{\xi}>0,$ that is, if $\omega$ gives a positive orientation of $\xi.$
A definition of weak fillability in dimensions greater than three was recently introduced by Massot, Niederkr$\mathrm{\ddot{u}}$ger and Wendl in \cite{MKW} and it generalizes this definition in dimension three.
A compact symplectic manifold $(W,\omega)$ is called \textbf{a weak symplectic filling} of a contact manifold $(V,\xi)$ if $V$ is a topological boundary of $W$, and if there is a positive contact form $\alpha$ on $V$ such that the orientation on $V$ given by $\alpha$ agrees with the boundary orientation of $W$ and $\alpha\wedge(\omega|_{\xi}+\tau d\alpha)^{d-1}>0,$ for all $\tau\geq0.$ Note that both definitions (strong and weak fillability) do not depend on the choice of a positive contact form for the same contact structure. A contact manifold $(V,\xi)$ is called \textbf{strongly (weakly) symplectically fillable} if it allows a strong (weak) symplectic filling.
A strong symplectic filling is also a weak symplectic filling. Indeed, if $(W,\omega)$ is a strong filling, then $\omega=d\alpha$ on $V,$ for some positive contact form $\alpha$ and thus $\alpha\wedge(\omega|_{\xi}+\tau d\alpha)^{d-1}=(1+\tau)^{d-1}\alpha\wedge d\alpha^d>0.$ In contrast, a weak filling may not be a strong filling. For example, $(T^3,\xi_k=\ker(\cos k\theta d\theta_1+\sin k\theta d\theta_2)), k>1$ is weakly fillable \cite{Giroux} but not strongly fillable \cite{Eliashberg2} and examples of weakly but not strongly fillable 5-dimensional contact manifolds are provided in \cite{MKW}. 
\vskip1mm
As proved by Eliashberg in \cite{Eliashberg0} an obstruction to fillability of contact 3-manifolds is overtwistedness. A contact structure $\xi$ on a 3-manifold $V$ is called \textbf{overtwisted} if there is a disc $\Delta\subset V$ that is tangent to $\xi$ along the boundary $\partial\Delta,$ that is $T_x\Delta=\xi_x,$ at every point $x\in\partial\Delta.$ The notion of overtwisted contact structures is recently generalized to higher dimensional contact manifolds by Borman, Eliashberg and Murphy \cite{bem}. They also show that an overtwisted contact structure is not (semi-positive) weakly fillable. 
\vskip1mm
We first summarise already known results on fillability of toric contact 3-manifolds:
\begin{theorem}\label{theorem 3-dim}
  A compact connected toric contact 3-manifold with a free toric action is strongly symplectically fillable if it is contactomorphic to $(T^3,\xi_1).$ Otherwise (when it is contactomorphic to $(T^3,\xi_k),$ for some $k>1$), it is only weakly fillable. A compact connected toric contact 3-manifold with a non-free toric action is strongly symplectically fillable if the corresponding moment cone spans an angle not greater than $\pi$. Otherwise, it is overtwisted.
 \end{theorem}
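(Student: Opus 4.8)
The plan is to split the statement into its two parts: the free-action case (the three-torus $T^3$ with its contact structures $\xi_k$) and the non-free-action case.  Both are essentially a matter of assembling known results from the literature about contact $3$-manifolds, so my strategy is to identify the precise contactomorphism type given by Lerman's classification and then cite the corresponding fillability result, rather than to construct fillings by hand.

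For the free-action part I would first invoke the classification statement already recorded in the introduction: every compact connected toric contact $3$-manifold with a free toric action is contactomorphic to some $(T^3,\xi_k)$ with $k\in\mathbb{N}$.  Since fillability depends only on the contact structure and not on the torus action, it then suffices to settle fillability for each $(T^3,\xi_k)$.  For $k=1$ I would exhibit $(T^3,\xi_1)$ as the boundary of a Liouville domain: the standard model here is the quotient of the cotangent disc bundle / the natural Stein-type filling, so I would point to the standard construction (e.g. $T^3$ as a hypersurface of contact type in $T^*T^2$ with its canonical Liouville form) to conclude strong fillability.  For $k>1$ the two facts are already quoted in the excerpt: $(T^3,\xi_k)$ is weakly fillable by Giroux \cite{Giroux} but not strongly fillable by Eliashberg \cite{Eliashberg2}, so the claim follows directly.

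For the non-free part I would again start from the recorded classification: such manifolds are topologically $S^1\times S^2$ or lens spaces $L_{p/q}$ equipped with toric contact structures, and by Lerman's work these correspond to moment data encoding the angular span of the moment cone.  The decisive input is Lerman's theorem \cite[Theorem 2.18.]{Lerman1} together with the companion result cited in the abstract: when the moment cone spans an angle greater than $\pi$, the toric contact structure is overtwisted, hence non-fillable by Eliashberg \cite{Eliashberg0}.  For the complementary case (angle at most $\pi$) I would argue strong fillability by realizing the manifold as the contact boundary of an equivariant symplectic toric domain: a moment cone of angle $\le\pi$ corresponds to a convex symplectic toric model whose associated Liouville form restricts to a contact form, giving the strong filling directly from the toric/Delzant-type picture.

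The main obstacle I anticipate is the strongly-fillable direction in the non-free case.  Citing overtwistedness (and thus non-fillability) for the wide-angle case is immediate once Lerman's classification is invoked, and the free case is pure citation; but to produce an honest strong filling in the narrow-angle case one must carefully convert the moment-cone hypothesis (\emph{angle not greater than $\pi$}) into an actual symplectic toric filling with a Liouville vector field transverse to the boundary inducing the correct contact form.  This requires matching the combinatorial/convexity condition on the moment cone with the existence of a compatible convex symplectic domain, which is the delicate geometric step; the rest of the proof is bookkeeping over Lerman's list and assembling the cited fillability statements.
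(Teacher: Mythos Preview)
Your overall decomposition matches the paper's: both split into free and non-free cases, invoke Lerman's classification, cite Giroux and Eliashberg for the $(T^3,\xi_k)$ family, and cite Lerman for overtwistedness when the angle exceeds $\pi$. The substantive difference is in how strong fillability is obtained for the non-free case with angle at most $\pi$. The paper does not try to build a toric Liouville domain directly from the moment cone. Instead it observes that angle strictly less than $\pi$ means the manifold is of Reeb type, hence a prequantization space over a toric symplectic \emph{orbifold}, and then invokes Niederkr\"uger--Pasquotto \cite{NP} to resolve the orbifold singularities and obtain a smooth strong filling (Corollary~\ref{corollary non free is fillable}). The borderline case angle equal to $\pi$ is identified with $(S^1\times S^2,\ker\beta_1)$ and filled by the explicit model $(S^1\times D^3,\omega,X)$ of Proposition~\ref{proposition non free is fillable}. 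Your proposed ``convex symplectic toric model'' would, for a generic strictly convex rational cone, produce an affine toric variety with a cyclic quotient singularity at the apex rather than a smooth Liouville domain, so you would still need a resolution step---which is exactly the content of \cite{NP}---and the half-plane case (angle equal to $\pi$) is not strictly convex, so it falls outside the Delzant-type picture and must be handled separately. Thus the ``delicate geometric step'' you flag is real, and the paper circumvents it by routing through the Reeb-type/prequantization characterization plus the explicit $S^1\times S^2$ model rather than a direct toric-domain construction.
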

The main result in this article is the following theorem:
\begin{theorem}\label{theorem fillable} Any compact connected toric contact manifold of dimension greater than three is weakly symplectically fillable. Moreover, if the toric action is not free, or if the toric contact manifold is  a trivial principal $T^d$-bundle over $S^{d-1}$ (with a free toric action), then $V$ is strongly symplectically fillable. 
\end{theorem}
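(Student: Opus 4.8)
The plan is to run through Lerman's classification \cite{Lerman1} case by case, writing $\dim V=2d-1$ with $d\geq3$ and exploiting the implication ``strong $\Rightarrow$ weak'' already recorded in the introduction. The classification, together with the structural observation of the abstract, splits $V$ into three families: (a) non-free actions of Reeb type; (b) non-free actions not of Reeb type, which are the products $T^k\times S^{2d-k-1}$ with their unique toric contact structure; and (c) free actions, i.e.\ principal $T^d$-bundles over $S^{d-1}$, which are either the trivial bundle $T^d\times S^{d-1}$ or, only when $d=3$, the non-trivial bundles $T^2\times L_k$, $k\in\mathbb{N}$. Since a strong filling is automatically weak, it suffices to produce strong fillings for families (a), (b) and the trivial member of (c), and a weak filling for the non-trivial members of (c).

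For family (a) the defining feature is that the Reeb vector field lies in the Lie algebra of $T^d$; after rescaling the contact form the Reeb flow is periodic, the moment cone $C$ is strictly convex and good, and $(V,\xi)$ is the smooth link of the affine toric variety $X_C$ determined by $C$. I would build the strong filling by capping the symplectic cone: on $\mathbb{R}_{>0}\times V$ the form $d(t\alpha)$ is symplectic with outward Liouville field $t\partial_t$, and the apex is filled by a toric resolution of the isolated singularity of $X_C$, the preimage of a small Stein ball being a smooth symplectic manifold with convex boundary $V$. Goodness of $C$ is exactly what makes $V$ smooth, while the resolution supplies the smooth strong (indeed Stein-type) filling; equivalently, one may present $V$ as a contact reduction of a standard sphere by a subtorus and fill it by the corresponding symplectic reduction of the ball, in the spirit of the Delzant construction.

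For family (b) the moment cone is degenerate: it contains the linear subspace coming from the free $T^k$-directions, so there is no apex and the capping of (a) does not apply directly. Here I would construct an explicit Liouville model on the associated toric domain, modelled for even $k$ on $T^k\times B^{2d-k}$ equipped with a non-product exact symplectic form adapted to $\xi$, or, conceptually, realize the unique toric contact structure by an iterated Bourgeois construction \cite{Bourgeois} starting from the natural open book of the standard contact sphere, whose Weinstein pages allow one to upgrade the resulting filling from weak to strong. The trivial free bundle is cleanest to treat on its own: $T^d\times S^{d-1}\cong ST^*T^d$ is the unit cotangent bundle of the flat torus with its canonical contact structure, and its disc cotangent bundle $DT^*T^d$ with the canonical Liouville form is a Weinstein, hence strong, filling.

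Finally, the non-trivial bundles in family (c) occur only for $d=3$ and are the $T^2\times L_k$; as established in the section on free actions they are Bourgeois contact manifolds over the $T^2$-factor, so by the weak fillability of Bourgeois contact structures proved in \cite{MKW} they are weakly symplectically fillable, and this is sharp in view of the five-dimensional weak-but-not-strong examples in the same reference. The main obstacle I anticipate lies entirely in the strong-fillability steps rather than the weak ones: in family (a) one must verify that capping the symplectic cone yields a genuinely smooth symplectic manifold despite the singular (orbifold) base, which is precisely where goodness of $C$ enters; and in family (b) one must exhibit an \emph{exact} symplectic form inducing the toric contact structure, since the naive product form $\omega_{T^k}+\omega_0$ is not exact and gives only a weak filling, with the odd-$k$ members (where even the product model $T^k\times B^{2d-k}$ degenerates) requiring a genuinely ad hoc construction.
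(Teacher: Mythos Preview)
Your case split via Lerman's classification matches the paper exactly, and families (a), the trivial free bundle, and the non-trivial free bundles are handled in essentially the same spirit: the paper invokes the Niederkr\"uger--Pasquotto resolution \cite{NP} for Reeb type (your ``cap the cone and resolve'' is the same idea in algebro-geometric clothing), identifies the trivial bundle with the cosphere bundle of $T^d$, and obtains weak fillability of $T^2\times L_k$ from the Bourgeois description via a direct five-dimensional lemma equivalent to what you cite from \cite{MKW}.

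The substantive divergence is family (b). You anticipate a genuine obstacle---no exact product form, odd $k$ ``degenerates'', need for an iterated Bourgeois trick to upgrade weak to strong---but none of this is necessary. The paper disposes of $(T^k\times S^{2d-k-1},\ker\beta_k)$ for \emph{all} $1\le k\le d$ in one line: take $T^k\times D^{2d-k}$ with
\[
\omega=\sum_{l=1}^{k}dx_l\wedge d\theta_l+\tfrac{i}{2}\sum_{j=1}^{d-k}dz_j\wedge d\bar z_j,\qquad
X=\sum_{l=1}^{k}x_l\,\partial_{x_l}+\tfrac{1}{2}\sum_{j=1}^{d-k}\bigl(z_j\,\partial_{z_j}+\bar z_j\,\partial_{\bar z_j}\bigr),
\]
where $D^{2d-k}\subset\mathbb{R}^k\times\mathbb{C}^{d-k}$. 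The point you missed is that one should pair each $d\theta_l$ with a \emph{linear} coordinate $dx_l$ on the ball rather than try to put a symplectic form on $T^k$ alone; this makes $\omega$ globally exact with primitive $\beta_k$ and works uniformly in $k$. Your proposed Bourgeois route for (b) is both circuitous and unjustified as stated, since the weak-fillability results in \cite{MKW} do not in general upgrade to strong fillings.

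One further overclaim: you say the weak-only result for $T^2\times L_k$ ``is sharp in view of the five-dimensional weak-but-not-strong examples'' of \cite{MKW}. Those examples are different manifolds; the paper explicitly leaves open whether $(T^2\times L_k,\ker\alpha_k)$ is strongly fillable.
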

The proof is based on the Lerman's classification of toric contact manifolds and on the classification of contact structures on non-trivial $T^3$-bundles over $S^2$ that admit free toric action, done in Section \ref{subsection free}. For these  particular contact structures we are able to show only weak fillability (see Proposition \ref{proposition free is fillable}). We do not know if strong fillability result holds in these cases. 
\vskip1mm
As already mentioned, Borman, Eliashberg and Murphy in \cite{bem} introduced the notion of overtwisted contact structures in dimensions higher than three. Since overtwisted contact structures are not weakly fillable  we conclude:
\begin{corollary} There does not exist toric contact manifold in dimension greater than three with overtwisted contact structure.
\end{corollary}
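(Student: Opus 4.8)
The plan is a short proof by contradiction, feeding the main fillability theorem into the higher-dimensional non-fillability of overtwisted contact structures. Suppose, for contradiction, that there were a compact connected toric contact manifold $(V,\xi)$ of dimension greater than three with $\xi$ overtwisted. By Theorem \ref{theorem fillable} the manifold $(V,\xi)$ would be weakly symplectically fillable, so it would admit a weak symplectic filling $(W,\omega)$ in the sense used throughout this article. But Borman, Eliashberg and Murphy \cite{bem} prove that an overtwisted contact structure admits no such weak filling. The existence of $(W,\omega)$ therefore contradicts overtwistedness of $\xi$, and so no overtwisted toric contact manifold of dimension greater than three can exist.

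The only step requiring genuine attention — and the main obstacle — is the hypothesis under which the non-fillability statement of \cite{bem} is available, namely semi-positivity of the weak filling. I would discharge this as follows. When $V$ is five-dimensional the filling $W$ has real dimension six, and every closed symplectic manifold of dimension at most six is semi-positive, so the argument applies without further input. For $\dim V \geq 7$ the filling has dimension at least eight and semi-positivity is no longer automatic, so I would instead verify it from the explicit fillings produced in the proof of Theorem \ref{theorem fillable}: these are assembled out of symplectic toric manifolds coming from Lerman's classification and out of Bourgeois-type constructions, whose second homotopy $\pi_2$ and first Chern class are computable, so that the inequality defining semi-positivity can be checked directly on each family.

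I would close by noting that the dimension hypothesis cannot be dropped and is consistent with the three-dimensional situation recorded in Theorem \ref{theorem 3-dim}: there, toric contact structures whose moment cone spans an angle larger than $\pi$ are overtwisted. Thus overtwisted toric contact manifolds do occur in dimension three, exactly the dimension in which Theorem \ref{theorem fillable}, and hence the corollary, makes no claim.
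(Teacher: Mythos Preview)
Your argument is exactly the paper's: combine Theorem~\ref{theorem fillable} with the non-fillability of overtwisted structures from \cite{bem}. The paper states this in one sentence and does not elaborate.

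You go further than the paper by flagging the semi-positivity hypothesis in \cite{bem}, which the paper only acknowledges parenthetically in the introduction. Your treatment of this point is sound in dimension five and in spirit correct for $\dim V\geq 7$, but one remark: in those higher dimensions no Bourgeois-type construction enters the proof of Theorem~\ref{theorem fillable} at all. The only free-action case above dimension five is the trivial bundle $T^d\times S^{d-1}$, filled by the disk cotangent bundle $T^d\times D^d$; the non-free, non-Reeb case is filled by $T^k\times D^{2d-k}$. Both are aspherical, hence trivially semi-positive. The Reeb-type case uses the resolved orbifold filling of \cite{NP}, which is where a genuine check would be needed; your phrase ``can be checked directly on each family'' is optimistic here, but the overall strategy is the right one.
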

    \vskip1mm
{\bf Acknowledgments.} I would like to thank Professors Miguel Abreu, Gustavo Granja, Klaus Niederkr$\mathrm{\ddot{u}}$ger, Milena Pabiniak and Branislav Prvulovi\'c for many useful disscussions. Moreover I thank the anonymous referees for their comments which have significantly improved an earlier version of this paper.
My research was supported by the Funda\c{c}\~ao para a Ci\^encia e a Tecnologia (FCT, Portugal) grant SFRH/BD/77639/2011 and project PTDC/MAT/117762/2010. 
\section{Toric contact manifolds} \label{section classification}
In this Section we recall the classification of compact connected toric contact manifolds done by Lerman and starting from this classification we completely determine all contact manifolds with a free toric action. We focus on the properties that are relevant to show symplectic fillability.
\vskip1mm
To any toric contact manifold $(V^{2d-1},\xi)$ and a $T^d$-invariant contact form $\alpha$ one can associate a $T^d$-invariant map, called a moment map,  $\mu_{\alpha}=(\mu_1,\ldots,\mu_d):V\rightarrow\mathbb{R}^d,$ defined by 
$\mu_k=\alpha(X_k),$ $k=1,\ldots d,$ where, for every $p\in V,$ $X_k(p)=(\frac{d}{dt}|_{t=0}exp\hskip1mm (0,\ldots,t,\ldots 0))\ast p.$ Note that an invariant contact form always exists. Indeed, if $\alpha$ is not invariant we obtain an invariant contact form $\alpha_{\textrm{inv}}$ by averaging, that is, $\alpha_{\textrm{inv}}=\int_{t\in T^d}t^*\alpha.$ \textbf{The moment cone} of a toric contact manifold $(V,\xi)$ is defined to be the cone over $\mu_{\alpha}(V)$, or, equivalently, it is the union of the origin and the moment map image of $(V\times\mathbb{R}_+,d(e^t\alpha_{\textrm{inv}})),$ the symplectization of $V,$ that is a toric symplectic manifold. Note that the definition of the moment cone does not depend on the choice of an invariant contact form.
\begin{definition}\label{def isomorphic}
Two toric contact manifolds $(V_i^{2d-1},\xi_i), i=1,2$ are \textbf{equivalent} if there is a contactomorphism $\varphi:V_1\mapsto V_2$ and an isomorphism $\lambda:T^d\mapsto T^d$ such that $\varphi(t\ast_1 p)=\lambda(t)\ast_2\varphi(p),$ for evety $t\in T^d$ and every $p\in V_1,$ where $\ast_1$ and $\ast_2$ denote toric actions on $V_1$ and $V_2$ respectively.
  \end{definition}
  If two toric contact manifolds are equivalent, then the corresponding moment cones differ by a $SL(d,\mathbb{Z})$ transformation. On the other hand, if moment cones differ by a $SL(d,\mathbb{Z})$ transformation and they uniquely determine toric contact manifolds, then these toric contact manifolds are equivalent.
\subsection{Toric contact manifolds with non-free toric action} \label{subsection non free}
 Let $(V^{2d-1},\xi)$ be a compact toric contact manifold with a non-free toric action.
 \vskip1mm
  $\bullet$ If $\dim V=3,$ then $V$ is diffeomorphic to a lens space $L_{p/q},$ $p\in\mathbb{Z},$ $q\in\mathbb{Z}\backslash\{0\}$ or $S^1\times S^2$  \cite[Theorem 2.18.(ii)]{Lerman1}. As a toric contact manifold $V$ is classified by two real numbers $t_1$ and $t_2$ such that  $t_1\in[0,2\pi),$ $t_1<t_2$ and $\tan t_i\in\mathbb{Q}$ or $\cos t_i=0,$ $i=1,2.$
If  $t_2-t_1\geq2\pi,$ then the moment cone of $V$ is the whole space $\mathbb{R}^2$. If $t_2-t_1<2\pi,$ then
the moment cone of $V$ has two facets $\{(s\cos t_i,s\sin t_i)\in\mathbb{R}^2|s\geq0\},$ $i=1,2,$ with the angle between facets $t_2-t_1.$ 
  In particular, when $t_2-t_1<\pi,$ then $V$ is of \textbf{Reeb type} (see below). When $t_2-t_1=\pi,$ then $V$ is contactomorphic to $(S^1\times S^2, \ker(xd\theta+\frac{i}{4}(zd\overline{z}-\overline{z}dz))),$ where $S^2=\{(z,x)\in\mathbb{C}\times\mathbb{R}|\hskip1mm |z|^2+x^2=1\}$. When $t_2-t_1>\pi,$ then the contact structure $\xi$ is overtwisted (see the Proof of Theorem 1.5 in \cite{Lerman1}). First examples of overtwisted toric contact manifolds are constructed by Lerman in \cite{Lerman2} using the method of contact cut.
  \vskip1mm
$\bullet$ If $\dim V>3,$ then $(V,\xi)$ is uniquely determined by the moment cone \cite[Theorem 2.18.(iii)]{Lerman1} and the cone is always convex \cite[Theorem 4.2.]{Lerman1}.
  Moreover:
\begin{enumerate} 
\item If the cone is \textbf{strictly convex}, meaning it does not contain any linear subspace of a positive dimension, then $V$ is a toric contact manifold of \textbf{Reeb type} \cite{BoyerGalicki} and a prequantization space. Here is the sketch of the proof, for more details see \cite[Proposition 2.19.]{AbreuMacarini}, \cite[Lemma 3.7.]{Lerman3}, \cite[Theorem 2.7.]{Boyer}. We remark that all properties of toric contact manifolds of Reeb type also hold in 3-dimensional case. According to the definition, the moment cone for $V$ withouth the origin is a moment map image of  the symplectization $W$ of $V,$ that is a (non-compact) toric symplectic manifold. Take any vector $R\in\mathbb{Z}^d$ given as a positive linear combination of inward normal vectors to the facets of the cone. The vector $R$ defines a $S^1$-subaction of the toric action on $W$. The symplectic reduction of $W$ with respect to that $S^1$-action is a toric symplectic orbifold $(M,\omega),$ whose moment polytope is the intersection of the moment cone and one hyperplane perpendicular to $R$. Changing the hyperplane corresponds to rescaling the symplectic form $\omega.$ The prequantization of $M,$ that is, the principal $S^1$-bundle with the first chern class $\frac{1}{2\pi}[\omega],$ is $V$ and the connection 1-form is $\alpha$ such that $d\alpha=\pi^*\omega,$
where $\pi:V\to M$ is the projection. The 1-form $\alpha$ is a contact form for $\xi$ and the $S^1$-bundle action on $V$ is precisely the action given by the Reeb vector field $R_{\alpha}$. The toric action on $M$ lifts to a torus action on $V$ that commutes with this $S^1$-action. A direct sum of these actions is a toric action on $V.$ Since the Reeb vector field $R_{\alpha}$ generates $S^1$-subaction of the toric action, by definition introduced by Boyer and Galicki, $V$ is of Reeb type. 
\item If the cone is \textbf{not strictly convex} and $k>0$ denotes the dimension of the linear subspace contained in the cone, then $k<d$ \cite[Lemma 4.5.]{Lerman1}, i.e. the cone is not equal to the whole space. Such a cone is isomorphic (there is $SL(d,\mathbb{Z})$ transformation) to the cone $C=\{x_1,\ldots,x_{d-k}\geq0\}\subset\mathbb{R}^d.$ The toric contact manifold corresponding to the cone $C$ is $T^k\times S^{2d-k-1},$ with the following contact structure. Using the coordinates
$$T^{k}\times S^{2d-k-1}=\Big\{(e^{i\theta_1},\ldots,e^{ i\theta_{k}},x_1,\ldots,x_k,z_1,\ldots ,z_{d-k})\in
T^{k}\times\mathbb{R}^k\times\mathbb{C}^{d-k}\hskip1mm|$$
$$\hskip1mm \sum_{l=1}^k|x_l|^2+\sum_{j=1}^{d-k}|z_j|^2=1\Big\},$$
 the contact structure is given as the kernel of the following contact form
$$\beta_{k}=\sum_{l=1}^{k}x_ld\theta_{l}+\frac{i}{4}\sum_{j=1}^{d-k}(z_jd\overline{z}_j-\overline{z}_jdz_j)$$
and the toric
 $T^{d}$-action on $T^k\times S^{2d-k-1}$ is given by
$$(s_1,\ldots,s_k,t_1,\ldots,t_{d-k})\ast (e^{i\theta_1},\ldots,e^{ i\theta_k},x_1,\ldots,x_k,z_1,\ldots ,z_{d-k})\longmapsto$$
$$(s_1e^{ i\theta_1},\ldots,s_ke^{i\theta_k},x_1,\ldots,x_k,t_1 z_1,\ldots, t_{d-k} z_{d-k}).$$
According to classification theorem by Lerman \cite[Theorem 2.18.(iii)]{Lerman1} it follows that $(V,\xi)$ is equivalent to $(T^{k}\times S^{2d-k-1},\ker\beta_k).$
\end{enumerate}
\subsection{Toric contact manifolds with a free toric action}\label{subsection free} 
Let $(V^{2d-1},\xi)$ be a compact toric contact manifold with a free toric action. Then the moment cone is the whole space $\mathbb{R}^d$. Indeed, consider the normalized invariant contact form $\alpha,$ that is, a contact form with $||\mu_{\alpha}(p)||=1,$ for every $p\in V,$ where $||\cdot||$ is the norm on $\mathbb{R}^d$ given by the standard inner product. This contact form can be defined by $\alpha(p)=\frac{1}{||\mu_{\beta}(p)||}\beta(p),$ where $\beta$ is any invariant contact form. Since $\mu_{\beta}(p)\neq0,$ for every $p\in V$ \cite[Lemma 2.12.]{Lerman1}, it follows that $\alpha$ is well defined. According to \cite[Corollary 4.7.]{Lerman1} the moment map $\mu_{\alpha}:V\rightarrow S^{d-1}\subset \mathbb{R}^d$ is a submersion and in particular surjective. Thus the moment cone is equal to $\mathbb{R}^d.$ (Note that toric contact manifolds with a non-free toric action whose moment cone is equal to the whole space exist only in dimension three.)
\vskip1mm
 $\bullet$ If $\dim V=3$ and $k\in\mathbb{N}$ is the number of connected components of the fibers of the moment map, then $V$ is contactomorphic to $(T^3,\xi_k=\ker(\cos k\theta d\theta_1+\sin k\theta d\theta_2))$ \cite[Theorem 2.18.(i)]{Lerman1}. Here $\theta_1,\theta_2$ and $\theta$ denote the coordinates on $T^3=(\mathbb{R}/2\pi\mathbb{Z})^3$.
 \vskip1mm
 $\bullet$ If $\dim V>3,$ then the fibers of the moment map are connected, thus $V$ is a principal $T^d$-bundle over $S^{d-1}.$ Moreover, each such bundle admits unique toric contact structure \cite[Theorem 2.18.(iii)]{Lerman1}. These contact structures are particular examples of the contact structures constructed by Lutz \cite{Lutz}. The toric contact manifold corresponding to the trivial bundle is the cosphere bundle of $T^d,$ that is
  $(T^d\times S^{d-1},\ker(\sum_{i=1}^dx_id\theta_i))$ with the unique toric action  \cite[Theorem 1.3.]{Lerman1} given by
$$(t_1,\ldots,t_d)\ast (e^{i\theta_1},\ldots,e^{ i\theta_d},x_1,\ldots,x_d)\longmapsto
(t_1e^{ i\theta_1},\ldots,t_de^{i\theta_d},x_1,\ldots,x_d).$$
Non-trivial principal $T^d$-bundles over $S^{d-1}$ exist only when $d=3.$ Indeed, since a $T^d$-bundle over a sphere $S^{d-1}$ can be trivialized over two open sets: upper and lower hemisphere, it follows that it is determined by only one transition map (up to homotopy), the map from the intersection of these sets, that is, from equator $S^{d-2},$ to the torus $T^d.$ Thus, these bundles are classified by $\pi_{d-2}(T^d)=(\pi_{d-2}(S^1))^d.$ Since $\pi_{d-2}(S^{1}) = 0$
for $d-2>1$ and $\pi_1(S^1)=\mathbb{Z}$ it follows that when $d>3$ there is only the trivial principal $T^d$-bundle over $S^{d-1}$ while
when $d=3,$ there are $\mathbb{Z}^3$ such bundles, each of them represented by the triple of integers $(k_1,k_2,k_3).$
Due to Lerman, each triple represents unique toric contact manifold with a free toric action.
\vskip1mm
We now want to show that all toric contact manifolds corresponding to the triples with the same greatest common divisor are contactomorphic.

\begin{lemma}\label{total space} The total space of principal $T^3$-bundles over $S^2$ that are represented by triples with the greatest common divisor equal to $k\in\mathbb{Z}\backslash\{0\}$ is $T^2\times L_k$ and the corresponding $T^3$-actions  differ only by a reparametrization of $T^3.$
\end{lemma}
\begin{proof}
Each $T^3$-bundle over the sphere $S^2$ is uniquely determined by only one transition map $g:S^1\mapsto T^3.$ The bundle represented by the triple $(k_1,k_2,k_3)$ has the transition map $g_{(k_1,k_2,k_3)}(e^{i\theta})=(e^{k_1\theta},e^{k_2\theta},e^{k_3\theta}).$ Thus, in order to show that two bundles with transition maps $g_{(k_1,k_2,k_3)}$ and $g'_{(k'_1,k'_2,k'_3)}$ are isomorphic, it is enough to find a map $\lambda:T^3\mapsto T^3$ such that $\lambda\circ g_{(k_1,k_2,k_3)}=g'_{(k'_1,k'_2,k'_3)},$ that is, to find a reparametrization of the torus $T^3.$ In terms of integer triples, that means to find a $SL(3,\mathbb{Z})$ matrix that
     sends $(k_1,k_2,k_3)$ to $(k_1',k_2',k_3').$
     \vskip1mm
     Let $(k_1,k_2,k_3)\in\mathbb{Z}^3$ be a primitive vector, that is, a triple with $GCD(k_1,k_2,k_3)=1$ (If $k_i=0,$ for some $i\in\{1,2,3\},$ then we take the greatest common divisor of the remaining ones).
 The vector $(k_1,k_2,k_3)\in\mathbb{Z}^3$ can be completed to a $\mathbb{Z}$-basis of $\mathbb{Z}^3.$ If $(k_1,k_2,k_3),e_1,e_2$ are such a basis, then the inverse of the matrix with columns given by $(k_1,k_2,k_3),e_1,e_2$  is an integer matrix with determinant equal to 1 that sends  $(k_1,k_2,k_3)$ to $(1,0,0).$ The same matrix sends $(k_1,k_2,k_3),$ with $GCD(k_1,k_2,k_3)=k,$ to $(k,0,0).$ This matrix defines a reparametrization of the torus $T^3$ that shows that bundles represented by $(k,0,0)$ and $(k_1,k_2,k_3)$ with $GCD(k_1,k_2,k_3)=k$ are isomorphic.
   \vskip1mm
 Let us now explain the total space of these bundles. 
      In general,
all principal $G-$bundles over a CW-complex $X,$ up to isomorphism of the bundles, are in bijection with $[X,BG]$, homotopy classes of all maps $f:X\rightarrow BG,$ where $BG$ denotes the classifying space for $G.$ Precisely, every such bundle is isomorphic to the bundle  $f^*\gamma$,  the pull back of the universal bundle $\gamma$ corresponding to $G,$ for some map $f:X\rightarrow BG.$ In particular, every principal $G=G_1\times G_2-$bundle is isomorphic to the bundle $f^*(\gamma_1\times \gamma_2),$ for some function $f:X\rightarrow (B(G_1\times G_2)\cong BG_1\times BG_2),$ where $\gamma_i$ is a universal principal $G_i$-bundle, $i=1,2.$  Since $f=(f_1,f_2)\circ\Delta,$ where $f_i:X\rightarrow BG_i$ and $\Delta:X\rightarrow X\times X$ is the diagonal map, it follows that $f^*(\gamma_1\times \gamma_2)=\Delta^*((f_1,f_2)^*(\gamma_1\times \gamma_2))=\Delta^*(f_1^*\gamma_1,f_2^*\gamma_2)=f_1^*\gamma_1\times_X f_2^*\gamma_2.$ This means that
  there is a bijection between principal $G_1\times G_2-$bundles over $X$ and a fibre sum of principal $G_1-$bundles over $X$ and principal $G_2-$bundles over $X$. 
\vskip1mm
In particular, there is a bijection between principal $T^3-$bundles over $S^2$ and a fibre sum of three principal $S^1-$bundle over $S^2.$ Zero in the triple $(k_1,k_2,k_3)$ represents a trivial $S^1$-bundle over $S^2$ while $k\in\mathbb{Z}\backslash\{0\}$ represents a non-trivial principal $S^1-$bundle over $S^2,$ the lens space $L_k,$ $k\in\mathbb{Z}\backslash\{0\},$ where $L_k=S^3/_{(z_1,z_2)\sim(e^{\frac{2\pi i}{k}}z_1,e^{\frac{2\pi i}{k}}z_2)}.$ Thus, the total space of the bundle represented by $(k,0,0), k\in\mathbb{Z}\backslash\{0\},$ is $T^2\times L_k.$
\vskip1mm
The free $T^3$-action on $T^2\times L_k$ represented by the triple $(k,0,0)$ is given by
\begin{equation}\label{(1,0,0)}
(t_1,t_2,t_3)\ast(e^{ i\theta_1},e^{ i\theta_2},[z_1,z_2]_k)\rightarrow(t_1e^{ i\theta_1},t_2e^{ i\theta_2},t_3\ast[z_1,z_2]_k),
\end{equation}
for every $(t_1,t_2,t_3)\in T^3$ and every $(e^{ i\theta_1},e^{ i\theta_2},[z_1,z_2]_k)\in T^2\times L_k,$
where  $[z_1,z_2]_k$ denotes a point in $L_k$ and the free $S^1=(\mathbb{R}/2\pi\mathbb{Z})-$action on $L_k$ is given by $t\ast[z_1,z_2]_k\rightarrow t^{\frac{1}{k}}[z_1,z_2]_k=[t^{\frac{1}{k}}z_1,t^{\frac{1}{k}}z_2]_k.$ Thus,
the $T^3$-action on the bundle $T^2\times L_k$ represented by the triple $(k_1,k_2,k_3)$ is given by 
$$\lambda(t_1,t_2,t_3)\ast(e^{ i\theta_1},e^{ i\theta_2},[z_1,z_2]_k),$$
 for every $(t_1,t_2,t_3)\in T^3$ and every 
    $(e^{ i\theta_1},e^{ i\theta_2},[z_1,z_2]_k)\in T^2\times L_k,$ where $\lambda:T^3\mapsto T^3$ is given by the matrix that sends $(k_1,k_2,k_3)$ to $(k,0,0).$ 
That means that $T^3$-actions represented by the triples with the same greatest common divisor differ only by a reparametrization of $T^3.$
\end{proof}
Let us find an invariant contact structure on these bundles. We start from the map $f(z_1,z_2)=z_1^2+z_2^2,$ that defines a compatible open book on $(S^3,\xi_{st}=\ker\frac{i}{4}\sum_{j=1}^2(z_jd\overline{z}_j-\overline{z}_jdz_j))$ with the binding $f^{-1}(0)\cap S^3$ and the fibration $\pi_{\theta}(z_1,z_2)=\frac{f(z_1,z_2)}{|f(z_1,z_2)|}.$ We now want to use the method by Bourgeois \cite{Bourgeois} to show that $\textrm{Re}f d\theta_1+\textrm{Im}fd\theta_2+\frac{i}{4}\sum_{j=1}^2(z_jd\overline{z}_j-\overline{z}_jdz_j)$ is a contact form on $T^2\times S^3.$ First, the binding $f^{-1}(0)\cap S^3$ consists of two connected components, circles: $\{(z_1,iz_1)\in S^3\}$ and $\{(z_1,-iz_1)\in S^3\}.$ Next, the solid tori $\{|iz_1-z_2|<|iz_1+z_2|\}$ and $\{|iz_1-z_2|>|iz_1+z_2|\}$ are tubular neighborhoods inside $S^3$ of these circles, respectively. Finally, $S^3$ is the union of these solid tori and the $T^2$-torus $\{|iz_1-z_2|=|iz_1+z_2|\}.$ Now consider a smooth function $\rho(z_1,z_2)=|f(z_1,z_2)|.$ This function is a distance function from the binding in the small neighborhood of the binding and is constant (equal to the square root of the radius of the sphere) on the torus  $\{|iz_1-z_2|=|iz_1+z_2|\},$ i.e. outside of the tubular neighborhood of the binding. If we consider the sphere of a radius $\epsilon,$ for a sufficiently small $\epsilon>0,$ then, due to Bourgeois \cite{Bourgeois}, the map $\rho\pi_{\theta}=f$ defines a contact form on $T^2\times S^3$ by $\textrm{Re}f d\theta_1+\textrm{Im}fd\theta_2+\frac{i}{4}\sum_{j=1}^2(z_jd\overline{z}_j-\overline{z}_jdz_j).$ 
A pull back of this contact form under the diffeomorphism $(e^{ i\theta_1},e^{ i\theta_2},z_1,z_2)\mapsto
   (e^{ i\theta_1},e^{ i\theta_2},\frac{1}{\sqrt{2}}(z_1+i\overline{z}_2,iz_1+\overline{z}_2))$ is a contact form
\begin{equation}\label{alpha}
\alpha=\alpha_1=i(z_1\overline{z}_2-\overline{z}_1z_2)d\theta_1+(z_1\overline{z}_2+
\overline{z}_1z_2)d\theta_2+\frac{i}{4}(
z_1d\overline{z}_1-\overline{z}_1dz_1-(z_2d\overline{z}_2-\overline{z}_2dz_2))
\end{equation}
and $\alpha$ is invariant under the toric $T^3$-action represented by the triple $(1,0,0)$ (see (\ref{(1,0,0)})).
 In particular, $\alpha$ is invariant under the diagonal $\mathbb{Z}_k$-action on $S^3$-factor of $T^2\times S^3$, so it descends to a contact form $\alpha_k$ on the quotient $T^2\times L_k$ and the contact form $\alpha_k$ is invariant under the $T^3$-action represented by $(k,0,0)$ (\ref{(1,0,0)}). Since the toric actions $(k,0,0)$ and $(k_1,k_2,k_3),$ for any choice of $k_1,k_2,k_3\in\mathbb{Z}$ with $GCD(k_1,k_2,k_3)=k$ differ by a reparametrization of the torus, it follows that $\alpha_k$ is invariant under toric action represented by any triple $(k_1,k_2,k_3)$ with $GCD(k_1,k_2,k_3)=k$. Thus, we conclude $(T^2\times L_k,\ker\alpha_k)$ is a contact manifold with free toric actions, for any choice of $k_1,k_2,k_3$ with $GCD(k_1,k_2,k_3)=k$. \\
 We proved the following Theorem:
 \begin{theorem}\label{theorem determine} Contact manifolds with a free toric action that correspond to non-trivial $T^3$-bundles over $S^2$ are of the form $(T^2\times L_k, \ker\alpha_k), k\in\mathbb{N}$ with the unique toric action given by (\ref{(1,0,0)}). More precisely, all toric contact manifolds represented by the triples with the greatest common divisor equal to $k$ are contactomorphic to $(T^2\times L_k,\ker \alpha_k)$ and the toric actions are equivalent, i.e. they differ by a reparametrization of the torus. 
 \end{theorem}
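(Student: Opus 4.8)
The plan is to combine the bundle-theoretic reduction of Lemma \ref{total space} with an explicit invariant contact form and Lerman's uniqueness statement. First I would pass to a normal form: by Lemma \ref{total space}, every non-trivial principal $T^3$-bundle over $S^2$ whose classifying triple $(k_1,k_2,k_3)$ satisfies $\mathrm{GCD}(k_1,k_2,k_3)=k$ has total space diffeomorphic to $T^2\times L_k$, and the corresponding free $T^3$-action is obtained from the model action (\ref{(1,0,0)}) attached to $(k,0,0)$ by composing with an $SL(3,\mathbb{Z})$ reparametrization $\lambda$ of the torus. Hence it suffices to produce a single $T^3$-invariant contact structure for the model action (\ref{(1,0,0)}) and then transport the conclusion along $\lambda$.

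Next I would construct the contact form. Starting from the compatible open book on $(S^3,\xi_{st})$ determined by $f(z_1,z_2)=z_1^2+z_2^2$, I would apply the Bourgeois construction \cite{Bourgeois} to the form $\mathrm{Re}f\,d\theta_1+\mathrm{Im}f\,d\theta_2+\frac{i}{4}\sum_{j=1}^2(z_jd\overline{z}_j-\overline{z}_jdz_j)$ on $T^2\times S^3$. The two tasks here are to verify that the binding, the pages, and the radial function $\rho=|f|$ satisfy the hypotheses of \cite{Bourgeois} on a sphere of sufficiently small radius, and to check the contact condition $\alpha\wedge(d\alpha)^2\neq0$. Pulling back along the diffeomorphism $(e^{i\theta_1},e^{i\theta_2},z_1,z_2)\mapsto(e^{i\theta_1},e^{i\theta_2},\frac{1}{\sqrt{2}}(z_1+i\overline{z}_2,iz_1+\overline{z}_2))$ then yields the explicit form $\alpha=\alpha_1$ of (\ref{alpha}), and a direct computation shows that $\alpha_1$ is invariant under the $(1,0,0)$-action (\ref{(1,0,0)}).

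I would then descend to the quotient and conclude. Since $\alpha_1$ is invariant under the diagonal $\mathbb{Z}_k$-action on the $S^3$-factor, it passes to a well-defined contact form $\alpha_k$ on $T^2\times L_k=T^2\times(S^3/\mathbb{Z}_k)$, which by construction is invariant under the $(k,0,0)$-action (\ref{(1,0,0)}); composing with $\lambda$ shows $\alpha_k$ is in fact invariant under every action coming from a triple with $\mathrm{GCD}=k$. Finally, Lerman's classification \cite[Theorem 2.18.(iii)]{Lerman1} guarantees that each such bundle carries a \emph{unique} toric contact structure, so $\ker\alpha_k$ must be that structure; consequently two triples with the same greatest common divisor give contact manifolds that are contactomorphic, with toric actions equivalent in the sense of Definition \ref{def isomorphic}, differing only by the reparametrization $\lambda$.

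The main obstacle I anticipate is the contact-form verification in the middle step: confirming that the open-book data on $(S^3,\xi_{st})$ genuinely meets the hypotheses of \cite{Bourgeois}, in particular that $\rho=|f|$ behaves as a distance-to-binding function on a small enough sphere and is constant outside the tubular neighborhoods of the two binding circles, and then organizing the $(d\alpha)^2$ computation together with the $\mathbb{Z}_k$-invariance check. Once the explicit expression (\ref{alpha}) is in hand the invariance computations are routine, and the identification of total spaces together with the uniqueness conclusion follow immediately from Lemma \ref{total space} and Lerman's theorem.
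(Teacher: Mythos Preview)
Your proposal is correct and follows essentially the same route as the paper: reduce to the $(k,0,0)$ model via Lemma \ref{total space}, build the invariant contact form through the Bourgeois construction applied to the open book $f(z_1,z_2)=z_1^2+z_2^2$ on $(S^3,\xi_{st})$, pull back to obtain the explicit $\alpha_1$ of (\ref{alpha}), descend by the diagonal $\mathbb{Z}_k$-action to $\alpha_k$, and then invoke Lerman's uniqueness \cite[Theorem 2.18.(iii)]{Lerman1} to conclude that $\ker\alpha_k$ is \emph{the} toric contact structure on each bundle with $\mathrm{GCD}=k$. The only difference is presentational: the paper carries out the open-book verification (binding components, tubular neighborhoods, behaviour of $\rho=|f|$) in the text preceding the theorem rather than flagging it as an obstacle, and leaves the appeal to Lerman's uniqueness implicit from the earlier discussion in Section \ref{subsection free}.
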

\begin{remark} As explained, for any $k\in\mathbb{N},$ the contact manifold $(T^2\times L_k,\ker\alpha_k)$ admits unique toric action, up to equivalence (Definition \ref{def isomorphic}). However, it admits infinitely many non-equivariant toric actions. Two toric actions on $(V,\xi)$ are \textbf{equivariant} if there is a contactomorphism $\varphi$ of $(V,\xi)$ such that $\varphi(t\ast_1 p)=t\ast_2\varphi(p),$ for every $p\in V$ and every torus element $t.$ That is, in contrast to the equivalence of toric actions (see Definition \ref{def isomorphic}), the reparametrization of the torus is not allowed. Assume that two toric actions represented by $(k_1,k_2,k_3)$ and $(k'_1,k'_2,k'_3)$ are equivariant. Then $\varphi$ is a bundle diffeomorphism, so it induces a diffeomorphism of basis $f: S^2\rightarrow S^2.$ If $\gamma_{(k_1,k_2,k_3)}$ and $\gamma_{(k'_1,k'_2,k'_3)}$ denote these bundles, then the bundles $f^*\gamma_{(k'_1,k'_2,k'_3)}$ and $\gamma_{(k_1,k_2,k_3)}$ are isomorphic ($f^*\gamma_{(k'_1,k'_2,k'_3)}=\gamma_{(k_1,k_2,k_3)}).$ Since the group of diffeomorphisms of $S^2$ is homotopy equivalent to the orthogonal group $O(3)$, and $O(3)$ has two connected components, $SO(3)$ and $-SO(3)$, it follows that $f$ belongs to one of these two subgroups. If $f\in SO(3),$ then $f$ is homotopic to the identity map $id$ on $S^2$ and thus 
  $$\gamma_{(k_1,k_2,k_3)}=f^*\gamma_{(k'_1,k'_2,k'_3)}=id^*\gamma_{(k'_1,k'_2,k'_3)}=\gamma_{(k'_1,k'_2,k'_3)}.$$
  On the other hand, if $f\in -SO(3),$ then $f$ is homotopic to the map $j(x,y,z)=(-x,-y,-z)$ on $S^2$ 
  and thus 
  $$\gamma_{(k_1,k_2,k_3)}=f^*\gamma_{(k'_1,k'_2,k'_3)}=j^*\gamma_{(k'_1,k'_2,k'_3)}=\gamma_{(-k'_1,-k'_2,-k'_3)}.$$
  We conclude that if $(k_1,k_2,k_3) \neq \pm (k_1',k_2',k_3'),$ then the actions $(k_1,k_2,k_3)$ and $(k'_1,k'_2,k'_3)$ are not equivariant. The choice of such triples with $GCD$ equal to $k$ is infinite.
\end{remark}
 \section{Weak and strong symplectic fillability}
 In this Section we examine the question of weak and strong symplectic fillability of compact connected toric contact manifolds using the classification described in the previous section.
 Note that the fillability property does not depend on the toric action, it only depends on the contact structure. 
 \vskip1mm
 As explained in Section \ref{subsection non free}.(1) every toric contact manifold of Reeb type is a prequantization of some toric symplectic orbifold (this also includes 3-dimensional case). It follows then that it is fillable by a symplectic orbifold. K. Niederkr$\mathrm{\ddot{u}}$ger and F. Pasquotto in \cite{NP} explained the resolution of the singular point in that orbifold and proved:
 \begin{theorem}\label{important theorem}(\cite[Proposition 4.4]{NP}) A contact manifold that is a prequantization of a symplectic orbifold is strongly symplectically fillable.
\end{theorem}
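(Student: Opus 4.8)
The plan is to show that a contact manifold $(V,\xi)$ arising as the prequantization $S^1$-bundle over a toric symplectic orbifold $(M,\omega)$ is strongly symplectically fillable by first resolving the orbifold singularities and then filling with the resulting smooth disc bundle. Recall from Section \ref{subsection non free}.(1) that $V$ carries a contact form $\alpha$ with $d\alpha=\pi^*\omega$, where $\pi:V\to M$ is the bundle projection and the $\frac{1}{2\pi}[\omega]$ is an integral class. The naive filling is the associated disc bundle $W_0=\{(p,v):p\in M,\ v\in L_p,\ |v|\le 1\}$ over $M$, where $L\to M$ is the complex line orbibundle whose unit circle bundle is $V$. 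On the complement of the zero section one puts the symplectic form obtained from the symplectization $d(r^2\,\pi^*\alpha)+ \pi^*\omega$ in polar fibre coordinates (equivalently the minimal coupling/Sternberg form), whose radial vector field $X=\tfrac12 r\partial_r$ is Liouville and restricts to the Reeb-dual contact form on the boundary $V=\{r=1\}$. The difficulty is that $M$, and hence $W_0$, is only an orbifold.

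The main obstacle, therefore, is exactly the orbifold singularities of $M$: the total space $W_0$ is not a smooth manifold near the singular fibres, so it is not yet a genuine symplectic filling. The key step is to invoke the resolution procedure of Niederkr\"uger and Pasquotto. The idea is local near each singular point: a toric symplectic orbifold singularity is modelled on $\mathbb{C}^m/\Gamma$ for a finite abelian $\Gamma$, and one replaces this quotient by a smooth toric symplectic manifold via a partial (equivariant) resolution, gluing it symplectically into $W_0$ away from a neighbourhood of the singularity while leaving the collar of the boundary $V$ untouched. Because the resolution is supported in the interior and is performed symplectically, the Liouville vector field $X$ near $\partial W=V$ is unaffected, so the contact form induced on the boundary is still $\alpha$ (up to the usual scaling that does not change the contact structure $\xi$).

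Concretely, the steps I would carry out are: first, describe the disc orbibundle $W_0$ and the symplectic form making $X$ a Liouville vector field transverse to $V$ with $\iota_X\omega_W|_{TV}=\alpha$; second, verify that the boundary contact structure is $\xi$ and that the boundary orientation matches, so that $(W_0,\omega_W)$ would be a strong filling were it smooth; third, apply the Niederkr\"uger--Pasquotto resolution to replace $W_0$ by a genuinely smooth compact symplectic manifold $(W,\omega)$ with $\partial W=V$, checking that the modification takes place away from the collar of $V$ so that the Liouville data on the boundary are preserved. The verification in steps one and two is essentially the standard minimal-coupling computation and is routine; the genuine content, which I would import wholesale rather than reprove, is the symplectic resolution of the orbifold singularities in step three, precisely the statement of \cite[Proposition 4.4]{NP}.
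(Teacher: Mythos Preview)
The paper does not give its own proof of this theorem: it is stated as a quotation of \cite[Proposition~4.4]{NP} and used as a black box, with only the one-sentence remark that Niederkr\"uger and Pasquotto ``explained the resolution of the singular point in that orbifold.'' So there is nothing in the paper to compare your argument against. Your outline --- build the associated disc orbibundle with its minimal-coupling symplectic form and radial Liouville field, observe that it would be a strong filling of $(V,\xi)$ if it were smooth, then invoke the symplectic resolution of cyclic orbifold singularities supported away from the boundary collar --- is exactly the strategy of the cited reference, and is correct in spirit.

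One minor point of notation: in your expression $d(r^2\,\pi^*\alpha)+\pi^*\omega$ the form $\alpha$ lives on $V$, not on $M$, so writing $\pi^*\alpha$ is not quite right; what you mean is the connection $1$-form on the total space of the line bundle (pulled back from $V$ via the radial projection away from the zero section). This is cosmetic and does not affect the argument.
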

We conclude:
\begin{corollary}\label{corollary non free is fillable}  Any toric contact manifold of Reeb type is strongly symplectically fillable.
\end{corollary}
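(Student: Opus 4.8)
The final statement is Corollary~\ref{corollary non free is fillable}: "Any toric contact manifold of Reeb type is strongly symplectically fillable." The plan is to deduce this directly from the two results already assembled in the excerpt, namely the structural description of Reeb-type toric contact manifolds in Section~\ref{subsection non free}.(1) and the fillability result Theorem~\ref{important theorem} of Niederkr\"uger and Pasquotto. The entire argument is a short syllogism, and the real content has already been done upstream; the corollary is essentially a repackaging.

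First I would recall precisely what was established in Section~\ref{subsection non free}.(1): given a toric contact manifold $(V,\xi)$ whose moment cone is strictly convex, one chooses a vector $R\in\mathbb{Z}^d$ lying in the interior of the dual cone (a positive combination of the inward normals to the facets), which generates an $S^1$-subaction of the torus on the symplectization $W$. The symplectic reduction of $W$ at a regular level with respect to this $S^1$-action is a toric symplectic orbifold $(M,\omega)$, with $\frac{1}{2\pi}[\omega]$ an integral (orbifold) cohomology class, and $V$ is exhibited as the total space of the prequantization circle bundle over $M$: the principal $S^1$-bundle with first Chern class $\frac{1}{2\pi}[\omega]$, carrying a connection $1$-form $\alpha$ satisfying $d\alpha=\pi^*\omega$ that is a contact form for $\xi$. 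By the definition of Boyer and Galicki, this is exactly what it means for $V$ to be of Reeb type, and the same construction applies verbatim in the $3$-dimensional case. Thus the key fact I would invoke is: \emph{every} toric contact manifold of Reeb type is the prequantization of a toric symplectic orbifold.

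Having recorded this, the second and final step is to feed it into Theorem~\ref{important theorem}, which asserts that any contact manifold arising as a prequantization of a symplectic orbifold is strongly symplectically fillable. Since the orbifold $(M,\omega)$ produced above is in particular a symplectic orbifold, the hypothesis of Theorem~\ref{important theorem} is met, and we conclude that $(V,\xi)$ is strongly symplectically fillable. This completes the proof.

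There is no genuine obstacle here, because the substantive work is imported from the two cited statements. The only point that warrants a moment's care is verifying that the prequantization bundle supplied by the Reeb-type structure is literally the same kind of object that Theorem~\ref{important theorem} takes as input --- that is, that the integrality of $\frac{1}{2\pi}[\omega]$ and the orbifold-connection data match the hypotheses of \cite[Proposition 4.4]{NP} (in particular that the reduced space genuinely is a symplectic orbifold, possibly with cyclic quotient singularities along the strata where the $R$-action is non-free). Once that compatibility is noted, the corollary follows immediately by composition, and I would present it in a single short paragraph rather than as a computation.
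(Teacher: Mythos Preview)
Your proposal is correct and matches the paper's approach exactly: the paper simply writes ``We conclude:'' before the corollary, treating it as the immediate combination of the prequantization description in Section~\ref{subsection non free}.(1) with Theorem~\ref{important theorem}. Your write-up just makes explicit the syllogism the paper leaves implicit, including the compatibility check with the hypotheses of \cite[Proposition 4.4]{NP}.
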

\begin{remark} From Corollary \ref{corollary non free is fillable} we see that a contact manifold with an overtwisted toric contact structure cannot be of Reeb type. 
Therefore the class of toric contact manifolds that are not of Reeb type, with a non-free toric action, is much larger in dimension $3$ than in higher dimensions, where there is only $T^k\times S^{2d-k-1},$ with the unique toric contact structure.
\end{remark}
\begin{proposition}\label{proposition non free is fillable} $(T^k\times S^{2d-k-1},\beta_{k}=\sum_{l=1}^{k}x_ld\theta_{l}+\frac{i}{4}\sum_{j=1}^{d-k}(z_jd\overline{z}_j-\overline{z}_jdz_j)), d\geq k\geq1$ is strongly symplectically fillable.
\end{proposition}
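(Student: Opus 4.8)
The plan is to exhibit an explicit exact (hence strong) symplectic filling whose underlying manifold is simply the product of the torus with a ball. Regard $V=T^k\times S^{2d-k-1}$ as the boundary of
\[
W=T^k\times D^{2d-k},\qquad D^{2d-k}=\Big\{(x_1,\dots,x_k,z_1,\dots,z_{d-k})\in\mathbb{R}^k\times\mathbb{C}^{d-k}\ \Big|\ \sum_{l=1}^k x_l^2+\sum_{j=1}^{d-k}|z_j|^2\le 1\Big\},
\]
so that $\partial W=T^k\times S^{2d-k-1}=V$ and $W$ is compact of dimension $2d$. I would keep the coordinates $(\theta_1,\dots,\theta_k,x_1,\dots,x_k,z_1,\dots,z_{d-k})$ of the ambient $T^k\times\mathbb{R}^k\times\mathbb{C}^{d-k}$ and extend the contact form to the one-form $\lambda=\sum_{l}x_l\,d\theta_l+\frac{i}{4}\sum_{j}(z_j\,d\overline z_j-\overline z_j\,dz_j)$ on all of $W$, observing that $\lambda$ restricts to $\beta_k$ along $V$.

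Next I would set $\omega=d\lambda$. A direct computation gives $\omega=\sum_{l}dx_l\wedge d\theta_l+\frac{i}{2}\sum_{j}dz_j\wedge d\overline z_j$, which has constant coefficients pairing $x_l$ with $\theta_l$ and $\operatorname{Re}z_j$ with $\operatorname{Im}z_j$; hence $\omega$ is nondegenerate at every point of $W$ (including the center of the ball), and it is globally exact with primitive $\lambda$, since each $d\theta_l$ is a genuine closed one-form on $T^k$. Thus $(W,\omega)$ is an exact symplectic manifold. I would then produce the Liouville vector field $X$ by solving $\iota_X\omega=\lambda$; writing $z_j=p_j+iq_j$ and matching coefficients yields
\[
X=\sum_{l=1}^k x_l\,\partial_{x_l}+\frac12\sum_{j=1}^{d-k}\big(p_j\,\partial_{p_j}+q_j\,\partial_{q_j}\big).
\]
Because $\iota_X\omega=\lambda$ and $d\lambda=\omega$, Cartan's formula immediately gives $L_X\omega=d\iota_X\omega=\omega$ on all of $W$, so $X$ is globally Liouville.

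The one point that requires genuine care — and the only place where the argument is not purely formal — is the transversality of $X$ to $V$, precisely because $X$ is not the naive radial field: the $\partial_{x_l}$ directions carry coefficient $1$ while the $z$-directions carry coefficient $\tfrac12$, a mismatch forced by the form of $\beta_k$. Setting $r^2=\sum_l x_l^2+\sum_j(p_j^2+q_j^2)$, I would compute $X(r^2)=2\sum_l x_l^2+\sum_j(p_j^2+q_j^2)$, so that on the boundary $\{r^2=1\}$ one obtains $X(r^2)=\sum_l x_l^2+1\ge 1>0$. Hence $X$ points strictly outward everywhere along $V$ and is transverse to it.

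Putting these together, $\omega(X,\cdot)|_{TV}=\lambda|_{TV}=\beta_k$, which is a positive contact form for $(V,\xi=\ker\beta_k)$ by Lerman's classification (the outward Liouville field fixes the orientation convention), and $L_X\omega=\omega$ holds on all of $W$; this exhibits $(W,\omega)$ as a strong, indeed exact, symplectic filling, completing the proof. As a consistency check, in the extreme case $d=k$ the construction recovers the standard fact that the cosphere bundle $\big(T^d\times S^{d-1},\sum_l x_l\,d\theta_l\big)$ is filled by the unit disk cotangent bundle $T^d\times D^{d}$ equipped with its canonical symplectic form.
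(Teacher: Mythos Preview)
Your proof is correct and follows exactly the same approach as the paper: the filling is $(T^k\times D^{2d-k},\omega)$ with $\omega=\sum_l dx_l\wedge d\theta_l+\tfrac{i}{2}\sum_j dz_j\wedge d\overline z_j$ and Liouville vector field $X=\sum_l x_l\,\partial_{x_l}+\tfrac12\sum_j(z_j\,\partial_{z_j}+\overline z_j\,\partial_{\overline z_j})$, which in real coordinates is precisely your $X$. The paper simply states $(W,\omega,X)$ without further comment; you have additionally supplied the verifications that $\iota_X\omega=\lambda$, that $\omega$ is nondegenerate, and that $X$ is outward-pointing along the boundary, all of which are correct.
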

\begin{proof} The strong symplectic filling is
$(T^k\times D^{2d-k},\omega,X)$ where
$$\omega=\sum_{l=1}^{k}dx_l\wedge d\theta_{l}+\frac{i}{2}\sum_{j=1}^{d-k}dz_j\wedge d\overline{z}_j\hskip2mm\textrm{and}\hskip2mm X=\sum_{l=1}^{k}x_l\frac{\partial}{\partial x_l}+\frac{1}{2}\sum_{j=1}^{d-k}(z_j\frac{\partial}{\partial z_j}+\overline{z}_j\frac{\partial}{\partial \overline{z}_j}).$$
\end{proof}
\begin{remark} For the contact manifolds from previous Proposition a stronger notion of fillability holds. A complex manifold $(M, J)$ is a \textbf{Stein manifold} if it admits a smooth non-negative proper function $f$ such that $d(-df\circ J)$ is a symplectic form on $M,$ i.e. a plurisubharmonic function. A function $f(z)=|z|^2$ is a plurisubharmonic function on
 $(\mathbb{C}^m,\iota),$ thus a complex space is a Stein manifold. 
If $(M, J)$ is a Stein manifold, then $(M\times\mathbb{C}^m,J\times\iota)$ is also a Stein manifold, called \textbf{m-subcritical Stein manifold}. More precisely, if $f$ is a plurisubharmonic on $M,$ then $\widetilde{f}=f+|z|^2$ is a plurisubharmonic on $M\times\mathbb{C}^m.$
 A contact manifold $(V,\xi)$ is (m-subcritical) Stein fillable if there is (m-subcritical) Stein manifold $(M,J)$ such that $V$ is a regular level of a plurisubharmonic function $f,$ the vector field $\textrm{grad}\hskip1mm f$ points outward $V$ and $-df\circ J$ is a positive contact form for $\xi.$ Stein fillability implies strong fillability. Indeed, for $(V=f^{-1}(c),\ker(-df\circ J))$  the strong filling is ($W=f^{-1}[0,c], d(-df\circ J)$). The contact manifold from the previous Proposition is for $d=k$ Stein fillable (being a cosphere bundle of $T^k$) and for $d>k$ it is $(d-k)$-subcritical Stein fillable.
\end{remark}
  For the purpose of this note we prove the following Lemma in dimension $5$, but, as mentioned in \cite{MKW} it holds in higher dimensions as well.

\begin{lemma}\label{general lemma} If $(V,\xi=\ker\alpha')$ is a weakly fillable contact 3-manifold, then a contact manifold $(T^2\times V,\ker(\alpha=f_1d\theta_1+f_2d\theta_2+\alpha'))$ is weakly fillable, for any choice of smooth functions $f_i:V\rightarrow\mathbb{R}, i=1,2$ that makes $\alpha$ a contact form.
\end{lemma}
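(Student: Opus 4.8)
The plan is to build a weak filling of $(T^2\times V,\ker\alpha)$ directly from the given weak filling $(W,\omega_0)$ of $(V,\ker\alpha')$, taking the product $W'=T^2\times W$ equipped with the symplectic form $\Omega=\omega_0+C\,d\theta_1\wedge d\theta_2$, where $\omega_0$ is pulled back along the projection $W'\to W$ and $C>0$ is a constant to be fixed at the very end. First I would record that $\Omega$ is symplectic on all of $W'$: since $\omega_0$ is pulled back from the $4$-manifold $W$ one has $\omega_0^3=0$, so $\Omega^3=3C\,\omega_0^2\wedge d\theta_1\wedge d\theta_2$, which is a positive volume form because $\omega_0^2$ is one on $W$. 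Moreover $\partial W'=T^2\times\partial W=T^2\times V$, as required, and the orientation induced by $\alpha$ is matched to the boundary orientation by the standard sign conventions. Note that $\Omega$ does not involve $f_1,f_2$, so no extension of these functions into $W$ is needed.

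Next I would verify the weak filling inequality $\alpha\wedge(\Omega|_{\xi}+\tau\,d\alpha)^{2}>0$ for all $\tau\ge0$, where $\xi=\ker\alpha$ has rank $4$ (here $d=3$). Writing $R'$ for the Reeb field of $\alpha'$, the vector fields $v_1=\partial_{\theta_1}-f_1R'$, $v_2=\partial_{\theta_2}-f_2R'$ together with a frame $e_1,e_2$ of $\xi'=\ker\alpha'$ satisfying $d\alpha'(e_1,e_2)>0$ form a frame of $\xi$; let $\nu=v^1\wedge v^2\wedge e^1\wedge e^2$ be the associated positive volume form on $\xi$. Since $\alpha(R_\alpha)=1$ for the Reeb field $R_\alpha$ of $\alpha$, the top form $\alpha\wedge\beta$ is positive exactly when the $4$-form $\beta$ is positive on this frame. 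Setting $B=\Omega|_\xi$ and $D=d\alpha|_\xi$, I would expand $(B+\tau D)^2=B^2+2\tau\,B\wedge D+\tau^2 D^2$ and read off each term as a scalar multiple of $\nu$.

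The computation rests on three positivity facts and one cancellation. The leading coefficient $D^2=(d\alpha)^2|_\xi$ is a positive multiple of $\nu$ because $\alpha$ is a contact form. The restriction of $\omega_0$ to the boundary is pulled back from the $3$-manifold $V$, hence decomposable, so $(\omega_0|_\xi)^2=0$; consequently $B^2$ is governed entirely by its interaction with the added term, giving $B^2=2C\,\omega_0(e_1,e_2)\,\nu$, which is positive by the weak filling hypothesis $\omega_0|_{\xi'}>0$. Similarly, the $C$-part of the linear term is $2\tau C\,d\alpha'(e_1,e_2)\,\nu$, again positive. Writing $(B+\tau D)^2=q(\tau)\,\nu$, one gets $q(\tau)=\delta\tau^2+2(b_0+Cd')\tau+2C\omega_0'$, where $\delta=D^2/\nu>0$, $d'=d\alpha'(e_1,e_2)>0$, $\omega_0'=\omega_0(e_1,e_2)>0$, and $b_0$ is the bounded but possibly indefinite coefficient of $\omega_0|_\xi\wedge d\alpha|_\xi$. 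Because $T^2\times V$ is compact, $\delta$, $d'$, $\omega_0'$ are bounded below by positive constants and $b_0$ is bounded.

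The delicate point, and the step I expect to be the main obstacle, is controlling the indefinite cross term $b_0$, since for intermediate $\tau$ it could a priori spoil positivity even though both endpoints $q(0)=2C\omega_0'$ and the $\tau^2$-behaviour are positive. This is precisely where the freedom in $C$ is spent: choosing $C$ large enough that $b_0+Cd'>0$ uniformly over the compact boundary makes $q(\tau)$ a quadratic with positive constant term, positive linear coefficient, and positive leading coefficient, hence $q(\tau)>0$ for every $\tau\ge0$. The same enlargement of $C$ preserves the symplectic condition $\Omega^3>0$, so a single sufficiently large $C$ closes all the requirements simultaneously, showing that $(T^2\times W,\Omega)$ is a weak filling of $(T^2\times V,\ker\alpha)$.
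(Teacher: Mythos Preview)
Your argument is correct and uses the same underlying filling as the paper, namely $T^2\times W$ with the product symplectic form $\omega_0+C\,d\theta_1\wedge d\theta_2$, together with compactness to kill the indefinite cross term. The execution differs: the paper fixes $C=1$ and instead invokes Gray stability to replace $\alpha$ by $\alpha_t=t(f_1d\theta_1+f_2d\theta_2)+\alpha'$ for small $t>0$, then shows $P_t(\tau)=\alpha_t\wedge(\omega+d\theta_1\wedge d\theta_2+\tau d\alpha_t)^2$ is positive by checking $P_t(0)>0$ and $\frac{d}{d\tau}P_t>0$. Your large-$C$ rescaling is the dual move to the paper's small-$t$ rescaling, and it buys you a slightly more elementary argument since you avoid Gray stability and read positivity directly off the coefficients of a quadratic. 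One small point you pass over quickly: the claim that $\delta=(d\alpha)^2|_\xi/\nu>0$ for your chosen frame is not automatic from $\alpha$ being contact---it is exactly the orientation compatibility $\alpha\wedge d\alpha^2\sim\alpha'\wedge d\alpha'\wedge d\theta_1\wedge d\theta_2$ that the paper isolates as an explicit assumption (harmless, since one may swap $\theta_1\leftrightarrow\theta_2$). With that convention fixed, both the boundary-orientation match and $\delta>0$ follow together, and your proof goes through.
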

\begin{proof} Choose an orientation on $V$ such that $\alpha'$ is a positive contact form, i.e. $\alpha'\wedge d\alpha'>0.$ Let $(W,\omega)$ be a weak filling of $(V,\xi=\ker\alpha')$. It then holds $\alpha'\wedge(\omega+\tau d\alpha')>0,$ for all constants $\tau\geq0,$ and, in particular $\alpha'\wedge \omega>0.$ Next, we fix the orientation on $T^2\times V,$ given by a contact form $\alpha,$
that is $\alpha\wedge d\alpha^2>0.$
Note that $\alpha'\wedge d\alpha'\wedge\textrm{vol}_{T^2}$ is also a volume form on $T^2\times V$. Assume $$\alpha'\wedge d\alpha'\wedge d\theta_1\wedge d\theta_2>0,$$
i.e. assume $\alpha'\wedge d\alpha'\wedge d\theta_1\wedge d\theta_2$ induces the same orientation as $\alpha\wedge d\alpha^2$.
 Since $\alpha'\wedge \omega$ induces the same orientation on $V$ as $\alpha'\wedge d\alpha'$ also it holds
 $$\alpha'\wedge \omega\wedge d\theta_1\wedge d\theta_2>0.$$
 Now, instead of $\alpha,$ for any $t>0$ consider 1-form
$$\alpha_t=t(f_1d\theta_1+f_2d\theta_2)+\alpha'.$$
Since $\alpha_t\wedge d\alpha_t\wedge d\alpha_t=t^2\alpha\wedge d\alpha\wedge d\alpha$ it follows that $\alpha_t,$ for any $t>0,$ is a contact form on $T^2\times V$ inducing the same orientation as $\alpha=\alpha_1,$ that is, it  holds
$$\alpha_t\wedge d\alpha_t^2>0.$$
Due to Gray stability, for all $t>0,$ contact structures $\ker\alpha_t$ are contactomorphic. Since symplectic fillability is an invariant of contact structures, in order to show that $(T^2\times V,\ker\alpha)$ is weakly fillable it is enough to show weak fillability of $(T^2\times V,\ker\alpha_t)$ for some $t>0.$ Due to the definition of weak fillability,  we have to show
 $P_t(\tau)>0$ for all $\tau\geq0$ and for some fixed $t>0$ where
$$
P_t(\tau)=\alpha_t\wedge(\omega+ d\theta_1\wedge d\theta_2+\tau d\alpha_t)^{\wedge2}.
$$
We compute
$$P_t(\tau)=\tau^2\alpha_t\wedge d\alpha_t^2+2\alpha_t\wedge (\omega+\tau d\alpha_t)\wedge d\theta_1\wedge d\theta_2+2\tau\alpha_t\wedge\omega\wedge d\alpha_t.$$
Note that for all $t>0$ it holds
$$P_t(0)=2\alpha'\wedge \omega\wedge d\theta_1\wedge d\theta_2>0.$$
In order to show that for some small $t>0$ it holds $P_t(\tau)>0$ for all $\tau\geq0,$ it is enough to show that the function
$P_t(\tau)$ is increasing, since $P_t(0)>0.$ So, we want to show that the first derivative (with respect to $\tau$) of $P_t(\tau)$ is positive (for some small fixed $t>0$). It follows that
$$\frac{d}{d\tau}P_t(\tau)=2\tau\alpha_t\wedge d\alpha_t^2+2\alpha_t\wedge  d\alpha_t\wedge d\theta_1\wedge d\theta_2+2\alpha_t\wedge\omega\wedge d\alpha_t=$$
$$2\tau\alpha_t\wedge d\alpha_t^2+2\alpha'\wedge  d\alpha'\wedge d\theta_1\wedge d\theta_2
+2t^2 (f_2df_1-f_1df_2)\wedge \omega\wedge d\theta_1\wedge d\theta_2.$$
The first summand in $\frac{d}{d\tau}P_t$ is positive for all $t>0$ and all $\tau>0$. The second summand is a positive constant (doesn't depend on $\tau$).  Let us choose $t>0$ small enough such that
$$2\alpha'\wedge  d\alpha'\wedge d\theta_1\wedge d\theta_2+t^2 (f_2df_1-f_1df_2)\wedge \omega\wedge d\theta_1\wedge d\theta_2>0.$$
 For such a small $t>0,$ it follows that $\frac{d}{d\tau}P_t$ is positive, thus $P_t$ is monotone, increasing function.\\
Note that $t>0$ depends on the point $p\in T^2\times V$ in which we compute $2\alpha'\wedge  d\alpha'\wedge d\theta_1\wedge d\theta_2+t^2 (f_2df_1-f_1df_2)\wedge \omega\wedge d\theta_1\wedge d\theta_2.$ If $V$ is compact we can find $t$ that will not depend on the point, i.e we can find $t$ such that 
$2\alpha'\wedge  d\alpha'\wedge d\theta_1\wedge d\theta_2+t^2 (f_2df_1-f_1df_2)\wedge \omega\wedge d\theta_1\wedge d\theta_2$ is positive for all points.
\end{proof}

\begin{proposition} \label{proposition free is fillable} $(T^2\times L_k,\ker \alpha_k), k\in\mathbb{N}$ is weakly symplectically fillable. 
  \end{proposition}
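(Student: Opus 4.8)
The plan is to recognize $\alpha_k$ as having exactly the shape handled by Lemma \ref{general lemma} and thereby reduce the whole statement to the weak fillability of a single explicit contact structure on the lens space $L_k$. Reading off the contact form (\ref{alpha}) and passing to the quotient, I would write, on $T^2\times L_k$,
$$\alpha_k=f_1\,d\theta_1+f_2\,d\theta_2+\alpha_k',$$
where $f_1=i(z_1\overline{z}_2-\overline{z}_1z_2)$, $f_2=z_1\overline{z}_2+\overline{z}_1z_2$, and $\alpha_k'$ is the descent of $\alpha'=\tfrac{i}{4}(z_1 d\overline{z}_1-\overline{z}_1 dz_1-(z_2 d\overline{z}_2-\overline{z}_2 dz_2))$. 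The first thing to check is that all three data actually descend from $S^3$ to $L_k$: since the defining $\mathbb{Z}_k$-action is the diagonal multiplication $(z_1,z_2)\mapsto(e^{2\pi i/k}z_1,e^{2\pi i/k}z_2)$, each of $f_1,f_2$ and the one-form $\alpha'$ is manifestly invariant, so $f_1,f_2$ descend to smooth functions on $L_k$ and $\alpha'$ descends to $\alpha_k'$. Because $\alpha_k$ is already known to be a contact form (it is the form $\alpha=\alpha_1$ of the Bourgeois construction, pushed to the quotient), the hypothesis of Lemma \ref{general lemma} on $f_1,f_2$ holds as soon as $(L_k,\ker\alpha_k')$ is established to be a weakly fillable contact $3$-manifold.

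The proposition therefore reduces to the weak fillability of $(L_k,\ker\alpha_k')$, and I would in fact prove the stronger fact that this contact structure is of Reeb type. The standard $T^2$-action $(s_1,s_2)\cdot(z_1,z_2)=(s_1z_1,s_2z_2)$ on $S^3$ preserves $\alpha'$, and the diagonal $\mathbb{Z}_k$ used to form $L_k$ is a subgroup of it; hence the action descends to an effective $T^2$-action on $L_k$ preserving $\ker\alpha_k'$, making $(L_k,\ker\alpha_k')$ a toric contact $3$-manifold. Computing the moment map for $\alpha'$ gives $\mu_{\alpha'}=(\tfrac12|z_1|^2,-\tfrac12|z_2|^2)$, whose image on $S^3$ is the segment joining $(\tfrac12,0)$ and $(0,-\tfrac12)$; the moment cone is thus the fourth quadrant, a strictly convex cone spanning the angle $\tfrac{\pi}{2}<\pi$. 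Passing to $L_k$ only refines the torus lattice by a finite-index sublattice and does not change this angle.

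With the moment cone identified as strictly convex of angle less than $\pi$, $(L_k,\ker\alpha_k')$ is a toric contact manifold of Reeb type, hence strongly — and in particular weakly — symplectically fillable by Theorem \ref{theorem 3-dim} (equivalently Corollary \ref{corollary non free is fillable}). Feeding this into Lemma \ref{general lemma} with $V=L_k$ and $\alpha'=\alpha_k'$ yields weak fillability of $(T^2\times L_k,\ker\alpha_k)$, which is precisely Proposition \ref{proposition free is fillable}. Note that this route delivers only \emph{weak} fillability of the five-dimensional manifold, consistent with the paper's comment that strong fillability of these examples is left open, since it is the $T^2$-stabilization of Lemma \ref{general lemma} — not the filling of the three-dimensional factor — that costs us the strong conclusion.

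I expect the delicate points to be bookkeeping rather than conceptual: pinning down the orientation and sign conventions so that the moment cone genuinely lands in a half-plane (the sign flip on the $z_2$-terms of $\alpha'$ is what sends the cone into the fourth rather than the first quadrant, but either way it remains acute), and checking that the descended $T^2$-action on $L_k$ is effective and non-free so that the three-dimensional classification truly applies. As a cross-check that avoids the toric machinery altogether, one can observe that the diffeomorphism $(z_1,z_2)\mapsto(z_1,\overline{z}_2)$ carries $\alpha'$ to the standard contact form on $S^3$ and is equivariant after conjugating the $\mathbb{Z}_k$-action, so $(L_k,\ker\alpha_k')$ is contactomorphic to a lens space with its standard, Stein-fillable, tight contact structure.
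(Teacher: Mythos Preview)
Your proposal is correct and follows essentially the same route as the paper: both reduce via Lemma~\ref{general lemma} to the weak fillability of $(L_k,\ker\alpha_k')$, identify this as a toric contact $3$-manifold, and conclude it is of Reeb type and hence strongly fillable by Corollary~\ref{corollary non free is fillable}. The only cosmetic difference is that you verify Reeb type by computing the moment cone directly and seeing its angle is $\tfrac{\pi}{2}<\pi$, whereas the paper argues by exclusion (the structure is the standard tight one, hence not overtwisted, and $L_k$ is not homeomorphic to $S^1\times S^2$, so by Lerman's classification it must be of Reeb type).
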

\begin{proof} The contact structure $(T^2\times S^3,\ker\alpha_1)$ given by (\ref{alpha}) is a particular example of the contact structure considered in Lemma \ref{general lemma} and since 
 $\alpha_k,$ $k>1$ is obtained from $\alpha_1$ under the diagonal $\mathbb{Z}_k$-action on $S^3,$ it follows that $(T^2\times L_k,\ker \alpha_k)$, $k>1$ is also a particular example considered in Lemma \ref{general lemma}. In order to apply Lemma \ref{general lemma} we have to show that $(S^3,\ker\alpha_1')$ and $(L_k,\ker\alpha_k'),$ $k>1$ are weakly fillable contact manifolds, where $\alpha_1'=\frac{i}{4}(z_1d\overline{z}_1-\overline{z}_1dz_1-(z_2d\overline{z}_2-\overline{z}_2dz_2))$ and $\alpha_k'$ denotes the $\mathbb{Z}_k$-quotient of $\alpha_1'$. 
\vskip1mm
 Note that $\ker\alpha_k',$ $k\in\mathbb{N},$ is the standard tight contact structure on the lens space $L_k,$ $(L_1=S^3)$ (see Sections 2.1. and 2.3. in \cite{Eliashberg1}), thus not overtwisted. Moreover, $(L_k,\ker\alpha_k')$ is invariant under the standard $T^2$-action on $L_k,$ thus, $(L_k,\ker\alpha_k')$ is a toric contact 3-manifold.  Since $L_k$ is not homeomorphic to $S^1\times S^2,$ according to classification of toric contact 3-manifolds, it follows that $(L_k,\ker\alpha_k')$ is of Reeb type, thus strongly fillable (Corollary \ref{corollary non free is fillable}) and, in particular, weakly fillable.
\end{proof}
We are now ready to prove Theorem \ref{theorem 3-dim} and Theorem \ref{theorem fillable}. As for toric contact 3-manifolds, we recall that Giroux in \cite{Giroux} proved that $(T^3,\xi_k=\ker(\cos( k\theta) d\theta_1+\sin( k\theta) d\theta_2)),$  $k\in\mathbb{N}$ are weakly fillable while Eliashberg in \cite{Eliashberg2} proved that only $(T^3,\xi_1)$ is strongly symplectically fillable. The rest of the proof of Theorem \ref{theorem 3-dim} follows from the classification of toric contact 3-manifolds (Section \ref{section classification}), Corollary \ref{corollary non free is fillable} and Proposition \ref{proposition non free is fillable}. The proof of Theorem \ref{theorem fillable} follows from
the classification (Section \ref{section classification}) and Corollary \ref{corollary non free is fillable}, Proposition \ref{proposition non free is fillable} and Proposition \ref{proposition free is fillable}.

\end{document}